\documentclass[reqno,b5paper]{amsart}
\usepackage{amsmath}
\usepackage{amssymb}
\usepackage{amsthm}
\usepackage{enumerate}
\usepackage[mathscr]{eucal}

\setlength{\textwidth}{121.9mm}
\setlength{\textheight}{176.2mm}

\newtheorem{theorem}{Theorem}[section]
\newtheorem{claim}{Claim}[section]

\begin{document}

\title[An embedding in the Fell topology]{An embedding in the Fell topology}
%\author[\v Lubica Hol\'a ]{\v Lubica Hol\'a}

\newcommand{\acr}{\newline\indent}

\address{\llap{*}Academy of Sciences, Institute of Mathematics \acr \v
Stef\'anikova 49,
81473 Bratislava, Slovakia
\acr Slovakia}

\email{hola@mat.savba.sk}

\thanks{}

\subjclass[2010]{Primary 54B20; Secondary 54B05.}
\keywords{embedding, hyperspace, Fell topology, Wijsman topology,  $\omega_1$.
The author  would like to thank to grant APVV-0269-11 and  Vega 2/0018/13.}

\begin{abstract}

Using a result from [HZ] it is shown that if a $T_2$ topological space $X$ contains a closed uncountable discrete subspace, then  the space $\omega_1 \times (\omega_1 + 1)$ embeds  as a closed subspace of $(CL(X),\tau_F)$, the hyperspace of nonempty closed subsets of $X$ equipped  with the Fell topology. We will use the above result to give a partial answer to the Question 3.8 in [CJ].

\end{abstract}

\maketitle

\section{Introduction}
\bigskip
\bigskip
\bigskip

Throughout this paper, let $CL(X)$ denote the family of all nonempty closed subsets of a given $T_2$ topological space. For $M \in CL(X)$, put

\bigskip
\centerline{$M^- = \{A \in CL(X): A \cap M \ne \emptyset\}$, $M^+ = \{A \in CL(X): A \subseteq M\}$}

\bigskip
and denote $M^c = X \setminus M$. The Vietoris topology [Mi] $\tau_V$ on $CL(X)$ has as subbase elements of the form $U^-$, $V^+$, where $U, V$ are open in $X$.

\bigskip
The Fell topology [Fe] $\tau_F$ on $CL(X)$ has as a subbase the collection

\bigskip
\centerline{$\{U^-: U$ open in $X\} \cup \{(K^c)^+: K$ compact in $X\}$.}

\bigskip
It si known [Be] that $(CL(X),\tau_F)$ is Hausdorff (regular, Tychonoff, respectively) iff $X$ is locally compact.

\bigskip

For a metric space $(X,d)$, let $d(x,A) =$ inf$\{d(x,a): a \in A\}$ denote the distance between a point $x \in X$ and a nonempty subset $A$ of $(X,d)$.

A net $\{A_\alpha: \alpha \in \lambda\}$ in $CL(X)$ is said to be Wijsman convergent to some $A$ in $CL(X)$ if $d(x,A_\alpha)\rightarrow d(x,A)$ for every $x \in X$. The Wijsman topology on $CL(X)$ induced by $d$, denoted by $\tau_{w(d)}$, is the weakest topology such that for every $x \in X$, the distance functional

\bigskip
\centerline{$d(x,.): CL(X)  \to R^+$}
\bigskip
is continuous. It can be seen easily that the Wijsman topology on $CL(X)$ induced by $d$ has the family

\bigskip
\centerline{$\{U^-: U$ open in $X\} \cup \{\{A \in CL(X): d(x,A) > \epsilon\}: x \in X, \epsilon > 0\}$}

\bigskip
as a subbase [Be].

The above type of convergence was introduced by Wijsman in [Wi] for sequences of closed convex sets in Euclidean space $R^n$, when he considered optimum properties of the sequential probability ratio test.

\bigskip
\bigskip

\section{Main result}
\bigskip
\bigskip

\begin{claim} ([HZ]) Let $X$ be a $T_2$ topological space which contains an uncountable closed discrete set. Then $\omega_1$ embeds into $(CL(X),\tau_F)$ as a closed set.

\end{claim}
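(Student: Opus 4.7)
The plan is to realise $\omega_1$ inside $(CL(X),\tau_F)$ as the family of tails of an $\omega_1$-sequence from the discrete set. Passing to a size-$\aleph_1$ subset if necessary, I may fix an injective enumeration $D=\{d_\alpha:\alpha<\omega_1\}$ of a closed discrete subset of $X$; then every subset of $D$ is closed in $X$ (as $D$ is closed and discrete). Define $f:\omega_1\to CL(X)$ by $f(\alpha)=A_\alpha:=\{d_\beta:\beta\geq\alpha\}$, and for each $d\in D$ fix, by discreteness, an open $U_d\subseteq X$ with $U_d\cap D=\{d\}$. The task is to show that $f$ is a closed embedding.

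For continuity at a limit $\alpha$, using first countability of $\omega_1$ I would check sequentially: if $\beta_n\nearrow\alpha$ and $\bigcap_i U_i^-\cap(K^c)^+$ is a basic Fell-open neighbourhood of $A_\alpha$, then it eventually contains $A_{\beta_n}$. Each $U_i^-$ is immediate, since any witness $d_\gamma\in U_i\cap A_\alpha$ has $\gamma\geq\alpha>\beta_n$, hence $d_\gamma\in A_{\beta_n}$. The $(K^c)^+$ factor uses the crucial observation that $K\cap D$, being closed and discrete inside the compact $K$, is finite; because $A_\alpha\cap K=\emptyset$, all finitely many ordinals $\gamma$ with $d_\gamma\in K$ lie below $\alpha$, so one pushes $\beta_n$ past all of them.

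For the embedding property, the Fell-open sets $(U_{d_\alpha})^-$ and $(\{d_\beta\}^c)^+$ trace on the image $I:=\{A_\gamma:\gamma<\omega_1\}$ as $\{A_\gamma:\gamma\leq\alpha\}$ and $\{A_\gamma:\gamma>\beta\}$ respectively, using the isolation $U_{d_\alpha}\cap D=\{d_\alpha\}$ together with $d_\beta\in A_\gamma\Leftrightarrow\gamma\leq\beta$. Their common trace $\{A_\gamma:\beta<\gamma\leq\alpha\}$ matches the basic order neighbourhood $(\beta,\alpha]$ of $\omega_1$ (and $(U_{d_0})^-$ alone isolates $A_0$), giving a neighbourhood base at every $\alpha$; combined with continuity and injectivity this makes $f$ a homeomorphism onto $I$.

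For closedness of $I$, take $B\in\overline I$ and a net $A_{\alpha_i}\to B$ in $\tau_F$. Any $x\in B$ satisfies $B\in U^-$ for each open $U\ni x$, so $A_{\alpha_i}\cap U\neq\emptyset$ eventually, forcing $U\cap D\neq\emptyset$ and hence $x\in\overline D=D$; so $x=d_\gamma$ for some $\gamma$. Since $B\neq\emptyset$, the ordinal $\beta^*:=\min\{\gamma:d_\gamma\in B\}$ exists. Two convergence properties now pin $B$ down: if $d_\beta\in B$, then $B\in(U_{d_\beta})^-$ forces $\alpha_i\leq\beta$ eventually (via the isolation of $d_\beta$ in $D$); if $d_\gamma\notin B$, then $\{d_\gamma\}$ is compact, $B\in(\{d_\gamma\}^c)^+$, and so $\alpha_i>\gamma$ eventually. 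Applied to $d_{\beta^*}\in B$ and to any putative missing $d_\gamma$ with $\gamma\geq\beta^*$, these give $\alpha_i\leq\beta^*\leq\gamma<\alpha_i$, a contradiction, so $B=A_{\beta^*}\in I$. The principal obstacle is that $(CL(X),\tau_F)$ need not be Hausdorff (it is so iff $X$ is locally compact), so one cannot invoke unique limits and must extract both inequalities directly from subbasic opens, as in the pinch just described.
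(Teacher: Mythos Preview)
Your proof is correct and follows essentially the same construction as the paper: map each $\alpha<\omega_1$ to the tail $\{d_\beta:\beta\ge\alpha\}$ of an enumerated closed discrete set, and verify continuity and openness via the subbasic Fell sets $(U_{d_\alpha})^-$ and $(\{d_\beta\}^c)^+$, exactly as the paper does. The only differences are cosmetic: the paper takes $E_0=X$ rather than $A_0=D$, and it proves closedness by explicitly exhibiting, for each $A\notin I$, a Fell-open neighbourhood disjoint from $I$ (splitting on whether $A$ meets $X\setminus E_1$), whereas you argue via nets that any limit point of $I$ already lies in $I$; both verifications are routine and equivalent.
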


\begin{proof}

Let $D = \{x_\alpha: \alpha < \omega_1\}$ be an uncountable closed discrete set in $X$. Put $E_0 = X$ and

\bigskip
\centerline{$E_\alpha = \{x_\eta: \alpha \le \eta < \omega_1\}$ for $\alpha \ne 0, \alpha < \omega_1$.}

\bigskip
Then for every $\alpha < \beta < \omega_1$, $E_\beta \subset E_\alpha$, $E_\alpha = \bigcap_{\beta < \alpha} E_\beta$ for every limit ordinal $\alpha$ and $\bigcap_{\alpha < \omega_1} E_\alpha = \emptyset$.

\bigskip
We will define the mapping $\varphi: \omega_1 \to (CL(X),\tau_F)$ as follows:  $\varphi(\alpha) = E_\alpha$ for every $\alpha < \omega_1$.

We will show that the mapping $\varphi$ is a homeomorphism between $\omega_1$ and $\{E_\alpha: \alpha < \omega_1\}$. Of course $\varphi$ is injective.

First we show that $\varphi$ is continuous. Let $\alpha < \omega_1$ be a limit ordinal. Then $E_\alpha = \bigcap_{\beta < \alpha} E_\beta$. Since $E_\alpha \subset E_\beta$ for every $\beta < \alpha$, it is sufficient to consider $E_\alpha \in (K^c)^+$ for a compact subset $K \subset X$. There must exist $\eta < \alpha$ such that $E_\eta \cap K = \emptyset$, otherwise $E_\alpha \cap K \ne \emptyset$. Thus $\varphi((\eta,\alpha]) \subset (K^c)^+$.

Now we show that $\varphi$ is open. It is easy to verify that for $\alpha$ isolated, $\varphi(\alpha)$ is isolated in $\varphi(\omega_1)$. Let $\alpha$ be a limit ordinal and consider an open interval $(\beta,\alpha]$. Let $U(x_\alpha)$ be an open neighbourhood of $x_\alpha$ such that $U(x_\alpha) \cap D = \{x_\alpha\}$. Then $U(x_\alpha)^- \cap (\{x_\beta\}^c)^+ \cap \varphi(\omega_1)$ is open in $\varphi(\omega_1)$ and

\bigskip
\centerline{$\varphi((\beta,\alpha]) = U(x_\alpha)^- \cap (\{x_\beta\}^c)^+ \cap \varphi(\omega_1)$.}

\bigskip
We will show that $\varphi(\omega_1)( = \{E_\alpha: \alpha < \omega_1\})$ is closed in $(CL(X),\tau_F)$. Let $A \in CL(X) \setminus \varphi(\omega_1)$. Suppose first that $A \cap (X \setminus E_1) \ne \emptyset$. Let $x \in X \setminus A$. Put $\mathcal U = (X \setminus E_1)^- \cap (\{x\}^c)^+$. Then $A \in \mathcal U$ and $\mathcal U \cap \varphi(\omega_1) = \emptyset$.

Suppose now that $A \subset E_1$. Put $\alpha_0 =$ min$ \{\beta < \omega_1: E_\beta \nsupseteq A\}$. Recall $\bigcap_{\alpha < \omega_1} E_\alpha = \emptyset$ and $A \ne \emptyset$. Clearly, $\alpha_0$ cannot be limit, so $\alpha_0 = \delta + 1$. Thus $x_\delta \in A$ and there must exist $\eta \ge \alpha_0$ such that $x_\eta \notin A$. Let $U(x_\delta)$ be an open neighbourhood of $x_\delta$ such that $U(x_\delta) \cap D = \{x_\delta\}$.  Put $\mathcal U = U(x_\delta)^- \cap (\{x_\eta\}^c)^+$. Then $A \in \mathcal U$ and $\mathcal U \cap \varphi(\omega_1) = \emptyset$.

\end{proof}

\bigskip
\begin{claim} Let $X$ be a  $T_2$ topological space which contains an uncountable closed discrete set. Then $\omega_1 + 1$ embeds into $(CL(X),\tau_F)$ as a closed set.

\end{claim}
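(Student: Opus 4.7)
The plan is to modify the construction of Claim 2.1 by anchoring every tail $E_\alpha$ to a common base point $p$, so that the decreasing family $(E_\alpha)$ acquires a nonempty Fell limit $\{p\}$ to play the role of the top element $\omega_1$ of $\omega_1 + 1$. Let $D$ denote the given uncountable closed discrete subset of $X$. I would pick an arbitrary $p \in D$ and enumerate $D \setminus \{p\} = \{x_\alpha : \alpha < \omega_1\}$ (which is still uncountable, closed and discrete). For $\alpha < \omega_1$ set $E_\alpha = \{p\} \cup \{x_\eta : \alpha \le \eta < \omega_1\}$ and let $E_{\omega_1} = \{p\}$; each $E_\alpha$ is closed in $X$, and $\varphi(\alpha) = E_\alpha$ defines an injection $\omega_1 + 1 \to (CL(X),\tau_F)$ because $x_\alpha \in E_\alpha \setminus E_\beta$ whenever $\alpha < \beta \le \omega_1$.

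Continuity at a limit ordinal $\alpha < \omega_1$ repeats the Claim 2.1 argument verbatim: the base point $p$ does not disturb $(K^c)^+$-neighbourhoods, since any compact $K$ disjoint from $E_\alpha$ already avoids $p$, and the finiteness of $K \cap D$ then supplies the required $\eta < \alpha$. The new case $\alpha = \omega_1$ is easy: neighbourhoods $U^-$ of $\{p\}$ with $p \in U$ are trivial because $p \in E_\gamma$ for every $\gamma$, while for $(K^c)^+$ with $p \notin K$, the key finiteness of $K \cap D$ lets me take $\beta_K = \max\{\eta : x_\eta \in K\}$ and obtain $\varphi((\beta_K, \omega_1]) \subseteq (K^c)^+$.

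Openness follows the Claim 2.1 pattern for ordinals below $\omega_1$, using $T_2$-separated neighbourhoods $U(x_\alpha)$ of the $x_\alpha$ that meet $D$ only at $x_\alpha$ and avoid $p$. For the top point, $\varphi((\beta, \omega_1]) = (\{x_\beta\}^c)^+ \cap \varphi(\omega_1 + 1)$, since $x_\beta$ lies in $E_\gamma$ iff $\gamma \le \beta$ (the case $\gamma = \omega_1$ is covered automatically, as $x_\beta \ne p$).

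To show $\varphi(\omega_1 + 1)$ is Fell-closed, take $A \in CL(X) \setminus \varphi(\omega_1 + 1)$ and split into three cases. If $A \not\subseteq D$, the subbasic open $(X \setminus D)^-$ is a neighbourhood of $A$ missing the image, since every $E_\gamma \subseteq D$. If $A \subseteq D$ but $p \notin A$, then $(\{p\}^c)^+$ isolates $A$, since every $E_\gamma$ contains $p$. Otherwise $p \in A$ and $T := A \setminus \{p\}$ is a nonempty subset of $\{x_\alpha : \alpha < \omega_1\}$ that is not a final segment; setting $\delta_0 = \min\{\alpha : x_\alpha \in T\}$ and choosing $\eta_1 > \delta_0$ with $x_{\eta_1} \notin A$, the neighbourhood $U(x_{\delta_0})^- \cap (\{x_{\eta_1}\}^c)^+$ (with $U(x_{\delta_0}) \cap D = \{x_{\delta_0}\}$ and $p \notin U(x_{\delta_0})$) contains $A$ and meets no $E_\gamma$: the first factor forces $\gamma \le \delta_0$ and the second forces $\gamma > \eta_1$, contradicting $\delta_0 < \eta_1$. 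I expect this final case to be the main obstacle, as it is the only place where the non-membership of $A$ in $\varphi(\omega_1 + 1)$ is exploited in a non-trivial way, and it must simultaneously invoke a minimal index in $T$, a larger missing index, and the Hausdorff separation of $p$ from $D \setminus \{p\}$.
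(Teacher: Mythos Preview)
Your proof is correct and follows essentially the same construction as the paper: the paper anchors the tails at $x_0$ by setting $F_\alpha = E_\alpha \cup \{x_0\}$ and $F_{\omega_1} = \{x_0\}$, which is exactly your ``base point $p$'' idea up to relabeling. The only cosmetic difference is that the paper keeps $F_0 = X$ (inherited from Claim~2.1), whereas your $E_0 = D$; this lets your closedness argument in the case $A \not\subseteq D$ get away with the single subbasic set $(X\setminus D)^-$, and otherwise your detailed verification is precisely what the paper leaves as ``easy to verify''.
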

\begin{proof} Let $D = \{x_\alpha: \alpha < \omega_1\}$ be an uncountable closed discrete set in $X$ as above and also let $E_\alpha$, $\alpha < \omega_1$ be sets defined in the proof of Claim 1. For every $\alpha \le \omega_1$, define the sets $F_\alpha$ as follows:

\bigskip

\centerline{$F_\alpha = E_\alpha \cup \{x_0\}, \alpha < \omega_1$ and $F_{\omega_1} = \{x_0\}$.}

\bigskip
Define the mapping $\eta: \omega_1 + 1 \to (CL(X),\tau_F)$ as follows: $\eta(\alpha) = F_\alpha$, $\alpha \le \omega_1$.

Using the proof of the Claim 2.1 it is easy to verify that $\eta$ embeds $\omega_1 + 1$ into $(CL(X),\tau_F)$ as a closed subspace.

\end{proof}

\bigskip
\begin{theorem} Let $X$ be a $T_2$ topological space which contains an uncountable closed discrete set. Then $\omega_1 \times (\omega_1 + 1)$ embeds into $(CL(X),\tau_F)$ as a closed set.
\end{theorem}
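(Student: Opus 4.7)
The naive attempt $\Phi(\alpha,\gamma) = E_\alpha \cup F_\gamma$, with $E_\alpha$ and $F_\gamma$ supplied by Claims 2.1 and 2.2 applied to two disjoint uncountable pieces of $D$, fails to give a closed embedding: as $\alpha \to \omega_1$ the net $\Phi(\alpha_\lambda, \gamma)$ Fell-converges to $F_\gamma$, which lies outside the image. The remedy is to arrange that every point of $D$ occurring in some $\Phi(\alpha,\gamma)$ is eventually removed once $\alpha$ passes a threshold, so that Fell-limits along cofinal $\alpha$-nets escape $CL(X)$ entirely.

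Split $D$ into two disjoint uncountable closed discrete sets $P = \{p_\alpha : \alpha < \omega_1\}$ and $Q = \{q_{\alpha, \gamma} : (\alpha, \gamma) \in \omega_1 \times \omega_1\}$ (indexing $Q$ via a bijection $\omega_1 \leftrightarrow \omega_1 \times \omega_1$; both $P$ and $Q$ remain closed discrete in $X$ as subsets of $D$). Define
\[
\Phi(\alpha,\gamma) = \{p_{\alpha'} : \alpha' \ge \alpha\} \cup \{q_{\alpha',\gamma'} : \alpha' \ge \alpha,\ \gamma' \ge \gamma\}
\]
for $\gamma < \omega_1$, and $\Phi(\alpha, \omega_1) = \{p_{\alpha'} : \alpha' \ge \alpha\}$. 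The $P$-part is a Claim 2.1-style embedding of $\omega_1$ and provides the column limit $\Phi(\alpha,\omega_1)$; the $Q$-part is a two-parameter analogue whose Fell-continuity and shrinkage to $\emptyset$ as $\gamma \to \omega_1$ mirror the Claim 2.1 argument. Crucially, both parts also shrink to $\emptyset$ as $\alpha \to \omega_1$, so the whole $\Phi$ Fell-escapes in that direction.

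Injectivity, continuity, and openness onto the image go through in the spirit of Claims 2.1 and 2.2: one recovers $\alpha$ from $\Phi(\alpha,\gamma) \cap P$ and $\gamma$ from $\Phi(\alpha,\gamma) \cap Q$; continuity reduces to the Fell-continuity of the two parts separately; and a basic rectangle $(\alpha_*, \alpha_0] \times (\gamma_*, \gamma_0]$ with $\gamma_0 < \omega_1$ is cut out of $\Phi(\omega_1 \times (\omega_1+1))$ by the subbasic intersection
\[
U(p_{\alpha_0})^- \cap (\{p_{\alpha_*}\}^c)^+ \cap U(q_{\beta_0, \gamma_0})^- \cap (\{q_{\beta_0, \gamma_*}\}^c)^+,
\]
where $\beta_0 > \alpha_0$ is fixed and the open sets $U(p_{\alpha_0}), U(q_{\beta_0, \gamma_0})$ are chosen to meet $D$ only at the indicated points (possible by closed-discreteness of $D$). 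The condition $\alpha \le \alpha_0 < \beta_0$ makes $\beta_0 \ge \alpha$ automatic, so the two $Q$-factors encode constraints on $\gamma$ alone; for neighborhoods at $(\alpha, \omega_1)$ the factor $U(q_{\beta_0, \gamma_0})^-$ is simply omitted.

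The main obstacle is closedness of $\Phi(\omega_1 \times (\omega_1+1))$ in $(CL(X), \tau_F)$. Given $C \in CL(X) \setminus \Phi(\omega_1 \times (\omega_1+1))$, the case analysis mirrors Claim 2.1's closedness argument. If $C \not\subseteq P \cup Q$, a subbasic $U^-$ with $U$ a neighborhood in $X$ of some $z \in C \setminus (P \cup Q)$ disjoint from the closed set $P \cup Q$ separates $C$ from the image, since every $\Phi$-value is contained in $P \cup Q$. Otherwise $C = C_P \cup C_Q$ with $C_P = C \cap P$, $C_Q = C \cap Q$, and either $C_P$ fails to be a tail of $P$, or $C_P = P_{\alpha^*}$ is a tail but $C_Q$ is not the corresponding block $Q_{\alpha^*, \gamma}$ for any $\gamma \le \omega_1$. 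In the former case, the closedness part of Claim 2.1 applied to $P$ produces a separator of the form $U(p_\delta)^- \cap (\{p_\eta\}^c)^+$ (the $Q$-contributions do not interfere because $P \cap Q = \emptyset$); in the latter, a parallel argument over the $Q$-block family produces an additional pair $U(q_{\alpha_0, \delta_0})^- \cap (\{q_{\alpha_1, \gamma_1}\}^c)^+$, which combined with a $P$-factor $U(p_{\alpha^*})^- \cap (\{p_\eta\}^c)^+$ for some $\eta < \alpha^*$ yields a subbasic Fell-open set containing $C$ but no $\Phi$-value.
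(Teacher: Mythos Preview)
Your construction is genuinely different from the paper's, and for good reason: the ``naive'' union map you reject in your first paragraph is essentially what the paper uses. The paper splits $D=D_0\cup D_1$, takes $\varphi$ and $\eta$ from Claims~2.1 and~2.2 on the two pieces, and sets $\Pi(\alpha,\beta)=\varphi(\alpha)\cup\eta(\beta)$ for $\alpha,\beta\ne 0$, with $\Pi(0,\beta)=\eta(\beta)$ and $\Pi(\alpha,0)=\varphi(\alpha)$ handled as special cases. Your objection applies verbatim to this map: for fixed $\beta>0$ the net $\Pi(\alpha,\beta)=E_\alpha\cup F_\beta$ Fell-converges to $F_\beta=\Pi(0,\beta)$ as $\alpha\to\omega_1$ (any compact $K$ meets the closed discrete set $D_0$ in only finitely many points, so $E_\alpha\cap K=\emptyset$ eventually), while $(0,\beta)$ is isolated in $\omega_1\times(\omega_1+1)$. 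Hence the paper's $\Pi$ is not open onto its image, and the line ``it is easy to see that $\Pi$ is open'' is not justified as written. Your two-parameter $Q$-block $\{q_{\alpha',\gamma'}:\alpha'\ge\alpha,\ \gamma'\ge\gamma\}$, which makes the second factor shrink with $\alpha$ as well as with $\gamma$, is precisely the device that removes this defect.

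Your own argument is essentially correct; only the closedness sketch needs a little more care. The separator $U(p_\delta)^-\cap(\{p_\eta\}^c)^+$ you invoke for ``$C_P$ fails to be a tail'' presupposes $C_P\ne\emptyset$. When $C\subseteq Q$ you should instead pick any $q_{\alpha_0,\gamma_0}\in C$ and use $U(q_{\alpha_0,\gamma_0})^-\cap(\{p_{\alpha_0}\}^c)^+$: membership of $q_{\alpha_0,\gamma_0}$ in a $\Phi$-value forces $\alpha\le\alpha_0$, while absence of $p_{\alpha_0}$ forces $\alpha>\alpha_0$. In the remaining case $C_P=P_{\alpha^*}$ and $C_Q\subseteq Q_{\alpha^*,0}$ with $C_Q\ne Q_{\alpha^*,\gamma}$ for every $\gamma$, a clean uniform separator is obtained by choosing $q_{\alpha_0,\gamma_0}\in C_Q$ and $q_{\alpha_1,\gamma_1}\notin C_Q$ with $\alpha_1\ge\alpha^*$ and $\gamma_1\ge\gamma_0$ (such a pair exists since $C_Q$ is not of the form $Q_{\alpha^*,\gamma}$), and taking
\[
U(p_{\alpha^*})^-\ \cap\ U(q_{\alpha_0,\gamma_0})^-\ \cap\ (\{q_{\alpha_1,\gamma_1}\}^c)^+;
\]
for any $\Phi(\alpha,\gamma)$ in this set the first two factors force $\alpha\le\alpha^*\le\alpha_1$ and $\gamma\le\gamma_0\le\gamma_1$, which is incompatible with the third. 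With these small additions your proof is complete and in fact repairs a genuine gap in the paper's argument.
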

\begin{proof} Let $D$ be an uncountable closed discrete set in $X$. We express $D$ as the disjoint union

\bigskip
\centerline{$D = D_0 \cup D_1$,}

\bigskip
such that $ \mid D_0 \mid = \aleph_1 = \mid D_1 \mid$. We enumerate $D_0 = \{x_\alpha: \alpha < \omega_1\}$ and $D_1 = \{y_\alpha: \alpha < \omega_1\}$ and put $E_0 = X$, $E_\alpha = \{x_\beta: \alpha \le \beta < \omega_1\}$, $\alpha \ne 0$, $\alpha < \omega_1$, $F_0 = X$, $F_\alpha = \{y_\beta: \alpha \le \beta < \omega_1\} \cup \{y_0\}$, $\alpha \ne 0, \alpha < \omega_1$ and $F_{\omega_1} = \{y_0\}$.

Define now the mappings $\varphi$ and $\eta$ as in the above Claims:

\bigskip
\centerline{$\varphi: \omega_1 \to (CL(X),\tau_F)$, $\varphi(\alpha) = E_\alpha$, $\alpha < \omega_1$,}
\bigskip
\centerline{$\eta:\omega_1 + 1 \to (CL(X),\tau_F)$, $\eta(\alpha) = F_\alpha$, $\alpha \le \omega_1$}

\bigskip
and define $\Pi: \omega_1 \times (\omega_1 + 1) \to (CL(X),\tau_F)$ as follows: $\Pi(0,0) = X$, $\Pi(0,\beta) = \eta(\beta)$, $\beta \ne 0, \beta \le \omega_1$, $\Pi(\alpha,0) = \varphi(\alpha)$, $\alpha \ne 0, \alpha < \omega_1$ and

\bigskip
\centerline{$\Pi(\alpha,\beta) = \varphi(\alpha) \cup \eta(\beta)$, $\alpha \ne 0 \ne \beta, \alpha < \omega_1, \beta \le \omega_1$.}

\bigskip
Put $\mathcal L =  \Pi(\omega_1 \times (\omega_1 + 1))$. We claim that $\Pi$ is an embedding from $\omega_1 \times (\omega_1 + 1)$ onto $(\mathcal L,\tau_F)$ and $\mathcal L$ is a closed subspace of $(CL(X),\tau_F)$.

\bigskip
Clearly, $\Pi$ is one-to-one. Using the continuity of $\varphi: \omega_1 \to (\varphi(\omega_1),\tau_F)$ and the continuity of $\eta: \omega_1 + 1 \to (\eta(\omega_1 + 1),\tau_F)$ it is easy to verify that also $\Pi: \omega_1 \times (\omega_1 + 1) \to (\mathcal L,\tau_F)$ is continuous. Also using the openess of $\varphi: \omega_1 \to (\varphi(\omega_1),\tau_F)$ and the openess of $\eta: \omega_1 + 1 \to (\eta(\omega_1 + 1),\tau_F)$ it is easy to see that $\Pi: \omega_1 \times (\omega_1 + 1) \to (\mathcal L,\tau_F)$ is open.

We will show that $\mathcal L$ is closed in $(CL(X),\tau_F)$. Let $A \in CL(X) \setminus \mathcal L$. Suppose first that
$A \cap (X \setminus (E_1 \cup D_1)) \ne \emptyset$. Let $x \in X \setminus A$. Put

\bigskip
\centerline{$\mathcal V = (X \setminus (E_1 \cup D_1))^- \cap (\{x\}^c)^+$.}

\bigskip
Then $A \in \mathcal V$ and $\mathcal V \cap \mathcal L = \emptyset$.

Suppose now that $A \subset E_1 \cup D_1$. First, suppose that $A \subset E_1$. We use the proof of Claim 2.1, where for $\mathcal U = U(x_\delta)^- \cap (\{x_\eta\}^c)^+$, $\delta < \eta$, we have $A \in \mathcal U$ and $\mathcal U \cap \varphi(\omega_1) = \emptyset$. Without loss of generality we can suppose that $U(x_\delta)$ is an open neighbourhood of $x_\delta$ such that $U(x_\delta) \cap (D_0 \cup D_1) = \{x_\delta\}$.  We claim that $\mathcal U \cap \mathcal L = \emptyset$.
Suppose there is some $(\alpha,\beta)$ with $\Pi((\alpha,\beta)) \in \mathcal U$, then $\varphi(\alpha) \in U(x_\delta)^- \cap (\{x_\eta\}^c)^+$, a contradiction. Similarly, if $A \subset D_1$.

Suppose now, that $A \cap E_1 \ne \emptyset$ and $A \cap D_1 \ne \emptyset$. Then either $A \cap E_1 \notin \varphi(\omega_1)$ or $A \cap D_1 \notin \eta(\omega_1 + 1)$. So we can again use the  above idea.

\end{proof}

\section{Embedding in the Wijsman topology}

In this part we will give a partial answer to the Question 3.8 in [CJ].

\bigskip

{\bf Ouestion 3.8 [CJ]} Let $(X,d)$ be a metric space. If $(CL(X),\tau_{w(d)})$ is non-normal, does $(CL(X),\tau_{w(d)})$ contain a closed copy of $\omega_1 \times (\omega_1 + 1)$?
\bigskip

It is known [Be] that if a metric space $(X,d)$ has nice closed balls, then the Fell topology $\tau_F$ and the Wijsman topology $\tau_{w(d)}$ on $CL(X)$ coincide.

A metric space $(X,d)$ is said to have nice closed balls [Be] provided whenever $B$ is a closed ball in $X$ that is a proper subset of $X$, then $B$ is compact.

It is also known [LL] that if $(X,d)$ is a metric space, then $(CL(X),\tau_{w(d)})$ is metrizable if and only if $(X,d)$ is separable.

\bigskip
\begin{theorem} Let $(X,d)$ be a metric space with nice closed balls. If $(CL(X),\tau_{w(d)})$ is non-normal, then $(CL(X),\tau_{w(d)})$ contains a closed copy of $\omega_1 \times (\omega_1 + 1)$.

\end{theorem}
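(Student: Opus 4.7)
The plan is to reduce the theorem to Theorem 2.1 by producing an uncountable closed discrete subset of $X$ and then invoking the coincidence of the Fell and Wijsman topologies that holds under the nice closed balls hypothesis.

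First, I would argue that $(X,d)$ cannot be separable. Indeed, by the cited result of [LL], separability of $(X,d)$ would make $(CL(X),\tau_{w(d)})$ metrizable and hence normal, contrary to assumption. So $(X,d)$ is a non-separable metric space.

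Next, from non-separability I would extract an uncountable closed discrete subset of $X$ by the standard Zorn's lemma argument. Concretely, for each positive integer $n$ choose a maximal $\frac{1}{n}$-separated subset $D_n \subset X$; each such $D_n$ is automatically closed and discrete (in fact uniformly discrete) in $X$, and by maximality is $\frac{1}{n}$-dense in $X$. If every $D_n$ were countable, then $\bigcup_n D_n$ would be a countable dense subset of $X$, contradicting non-separability. Hence some $D_n$ is uncountable, giving the required uncountable closed discrete set $D \subset X$.

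With $D$ in hand, Theorem 2.1 produces a closed embedding of $\omega_1 \times (\omega_1+1)$ into $(CL(X),\tau_F)$. Since $(X,d)$ has nice closed balls, the result of [Be] quoted in the paper says that $\tau_F$ and $\tau_{w(d)}$ coincide on $CL(X)$, so the very same embedding realises $\omega_1 \times (\omega_1 + 1)$ as a closed subspace of $(CL(X),\tau_{w(d)})$. The only delicate point is really the bookkeeping step of turning non-normality of the hyperspace into non-separability of the base space via [LL]; once that is done, all of the substantive work has already been carried out in Theorem 2.1, and the conclusion follows with no further effort.
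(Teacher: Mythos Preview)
Your proposal is correct and follows exactly the approach implicit in the paper: the paper states Theorem 3.1 without a separate proof, having just recorded the two facts from [LL] (separability of $X$ iff metrizability of $(CL(X),\tau_{w(d)})$) and [Be] ($\tau_F=\tau_{w(d)}$ under nice closed balls) that, together with Theorem 2.1, yield the result in the way you describe. Your explicit extraction of an uncountable closed discrete set from non-separability is the only detail you add beyond what the paper leaves to the reader.
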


\bigskip

Moreover we have the following theorem which gives a better  partial answer to the Question 3.8 in [CJ].

\begin{theorem} Let $(X,d)$ be a metric space such that every closed proper ball in $X$  is totally bounded. If $(X,d)$ is non-separable, then $(CL(X),\tau_{w(d)})$ contains a closed copy of $\omega_1 \times (\omega_1 + 1)$.

\end{theorem}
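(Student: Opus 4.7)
The strategy is to re-use the map $\Pi : \omega_1 \times (\omega_1+1) \to (CL(X),\tau_F)$ furnished by Theorem 2.1 and to show that, under the present hypothesis, it is already a closed embedding when the codomain carries the a priori finer Wijsman topology. The key starting observation is the inclusion $\tau_F\subseteq\tau_{w(d)}$ valid on every metric hyperspace: given $K\subseteq X$ compact and $A\in (K^c)^+$, one has $\delta:=d(A,K)>0$, and covering $K$ by finitely many balls of radius $\delta/2$ around points $k_1,\dots,k_n\in K$ produces the Wijsman-open set $\bigcap_{i=1}^n\{B\in CL(X):d(k_i,B)>\delta/2\}$, which contains $A$ and lies inside $(K^c)^+$. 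Consequently, the $\tau_F$-closedness of $\mathcal L := \Pi(\omega_1\times(\omega_1+1))$ and the $\tau_F$-openness of $\Pi$ onto $\mathcal L$ obtained in Theorem 2.1 transfer to $\tau_{w(d)}$ at no extra cost, so the only task left is to establish the $\tau_{w(d)}$-continuity of $\Pi$.

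Since $(X,d)$ is non-separable, the standard argument (for each $n$ take a maximal $1/n$-separated subset of $X$; if all such were countable their union would be a countable dense set) produces some $\epsilon_0>0$ and an uncountable $\epsilon_0$-separated $D\subseteq X$, which is automatically closed and discrete. Split $D=D_0\cup D_1$ into two disjoint uncountable pieces and construct $\Pi$ from $D_0$ and $D_1$ exactly as in the proof of Theorem 2.1. The key geometric consequence of the totally-bounded-ball hypothesis is the finiteness statement: \emph{for every $x\in X$ and every $r\ge 0$ with $B[x,r]\ne X$, the set $D\cap B[x,r]$ is finite}, since a totally bounded set is covered by finitely many balls of radius $\epsilon_0/3$, and each such ball meets the $\epsilon_0$-separated set $D$ in at most one point.

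Because the subbase elements $U^-$ are shared between $\tau_F$ and $\tau_{w(d)}$, the $\tau_{w(d)}$-continuity of $\Pi$ at any non-isolated point reduces, via the formula $d(x,A\cup B)=\min(d(x,A),d(x,B))$, to the $\tau_{w(d)}$-continuity of the maps $\varphi:\alpha\mapsto E_\alpha$ and $\eta:\beta\mapsto F_\beta$ at their respective limit points. Fix $x\in X$ and a limit $\alpha_0<\omega_1$; write $r=d(x,E_{\alpha_0})$ and take $\epsilon'>0$ with $r-\epsilon'>0$ (the complementary case is trivial). The ball $B[x,r-\epsilon']$ must be a proper subset of $X$, for otherwise every point of $E_{\alpha_0}$ would lie within $r-\epsilon'<r$ of $x$, contradicting the value of $r$. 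The finiteness statement then yields only finitely many indices $\beta$ with $d(x,x_\beta)\le r-\epsilon'$, and each such $\beta$ must satisfy $\beta<\alpha_0$ (else $x_\beta\in E_{\alpha_0}$ would force $d(x,E_{\alpha_0})\le r-\epsilon'$). Picking $\gamma<\alpha_0$ above the largest such index — possible because $\alpha_0$ is a limit — yields $d(x,E_\alpha)\ge r-\epsilon'$ for every $\alpha\in(\gamma,\alpha_0]$, and together with the upper bound $d(x,E_\alpha)\le r$ from $E_\alpha\supseteq E_{\alpha_0}$ this is the desired continuity. The argument for $\eta$ at any limit $\beta_0\le\omega_1$ is entirely parallel, with the inclusion $y_0\in F_\beta$ supplying the matching upper bound $d(x,F_\beta)\le d(x,y_0)=d(x,F_{\omega_1})$; the uncountable cofinality at $\beta_0=\omega_1$ causes no difficulty because the finiteness statement still bounds the relevant index set to a finite one. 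The main obstacle is precisely the dichotomy on whether $B[x,r-\epsilon']$ is a proper ball — this is the single place where the weakened hypothesis of \emph{totally bounded} rather than \emph{compact} proper balls is genuinely used — and the non-proper case is dispatched at once by the direct contradiction with the definition of $r$.
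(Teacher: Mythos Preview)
Your proof is correct and follows essentially the same route as the paper: obtain an $\epsilon$-discrete uncountable $D$ from non-separability, feed it into the construction of Theorem~2.1, observe that openness and closedness transfer automatically because $\tau_F\subseteq\tau_{w(d)}$, and verify Wijsman-continuity of $\Pi$ via the fact that any proper closed ball, being totally bounded, meets $D$ in only finitely many points. You spell out the inclusion $\tau_F\subseteq\tau_{w(d)}$ and the two-sided distance estimate more carefully than the paper does, but the argument is the same.
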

\begin{proof} Since $(X,d)$ is non-separable, there exist $\epsilon > 0$ and a set $D \subset X$ with $\mid D \mid = \aleph_1$ which is $\epsilon$-discrete, that is,  $d(x,y) \ge \epsilon$ for all distinct $x, y \in D$. We express $D$ as a disjoint union $D = D_0 \cup D_1$ as in the proof of Theorem 2.1 and we will proceed as in the proof of Theorem 2.1. We claim that $\Pi : \omega_1 \times (\omega_1 + 1) \to (CL(X),\tau_{w(d)})$ is embedding. Of course, it is sufficient to prove that $\Pi$ is continuous.

It is sufficient to verify that if $d(x,\Pi((\alpha_0,\beta_0)) > r$ for some $x \in X$ and $r > 0$, then
$d(x,\Pi((\alpha,\beta)) > r$ for all $(\alpha,\beta)$ from a neighbourhood of $(\alpha_0,\beta_0)$. However, it is clear, since the closed proper ball with center $x$ and the radius $r$ can contain only finitely many points of the set $D$.

\end{proof}

\bigskip

Theorem 3.2 gives another proof of the known result (Corollary 5.8, [HN]), which claims,  that if $(X,d)$ is a metric space in which every closed proper ball is totally bounded, then $(CL(X),\tau_{w(d)})$ is normal if and only if $(CL(X),\tau_{w(d)})$ is metrizable.

\bigskip

\vskip 1pc

\end{document}